\newcommand{\erz}[1]{\langle #1 \rangle}
\newtheorem{Th}{Theorem}[section]
\newtheorem{Le}[Th]{Lemma}
\newtheorem{Res}[Th]{Result}
\newcommand{\gauss}[2]{{#1\brack #2}}
\title{A note on Erd\H os-Ko-Rado sets of generators in Hermitian polar spaces}
\author{Klaus Metsch\thanks{Justus-Liebig-Universit\"{a}t, Mathematisches Institut,
Arndtstra{\ss}e 2, D-35392 Gie{\ss}en, and Department of Pure Mathematics and Computer Algebra, Ghent University, Krijgslaan 281-S22, 9000 Gent, Belgium}  }
\date{\today}
\begin{document}

\thispagestyle{empty}
\parindent0ex
\parskip2ex

\maketitle

\setcounter{page}{1}
\begin{abstract}
The size of the largest Erd\H os-Ko-Rado set of generators in the finite classical polar space is known for all polar spaces except for $H(2d-1,q^2)$ when $d\ge 5$ is odd. We improve the known upper bound in this remaining case by using a variant of the famous Hoffman's bound.
\end{abstract}

{\bf Keywords:} polar space, Erd\H os-Ko-Rado set

{\bf MSC:}  05B25, 05E30, 51A50

\section{Introduction}

An Erd\H os-Ko-Rado set of generators in a finite classical polar space is a set of generators of the polar space that have mutually non-trivial intersection. The largest size of an Erd\H os-Ko-Rado set of generators in a finite classical polar space was determined in \cite{Pepe} for all finite classical polar spaces except for the hermitian polar space $H(2d-1,q^2)$ of odd rank $d\ge 5$. Here rank means vector space rank and not projective dimension. The best known upper bound for this remaining case was proved in \cite{Ihringer&Metsch}. The idea of the proof was to formulate a linear optimization problem whose solution gives an upper bound. This idea goes back to Delsarte and uses the primitive idempotents of the associations scheme related to set of generators of a polar space, see \cite{Brouwer}. In \cite{Ihringer&Metsch} we were however not able to determine the optimal solution of the optimization problem. Using a slightly different approach, the previous bound can be improved as follows.

\begin{Th}\label{main}
If $S$ is an Erd\H os-Ko-Rado set of generators of $H(2d-1,q^2)$, $d\ge 5$ odd, then
\begin{align}\label{mainresult}
|S|\le ((q^2+q+1)q^{2d-3}+1)\prod_{i=1\atop 2i\not=d\pm1}^{d-1}(q^{2i-1}+1).
\end{align}
\end{Th}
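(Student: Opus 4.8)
The plan is to use a weighted form of Hoffman's ratio bound inside the dual polar association scheme on the generators of $H(2d-1,q^2)$. Let $R_0,R_1,\dots,R_d$ be its relations, where $(\pi,\pi')\in R_i$ means that the generators $\pi,\pi'$ meet in a subspace of vector codimension $i$; let $A_i$ be the corresponding $0/1$ matrices with $A_0=I$, let $k_i$ be the valency of $R_i$ and $v=\sum_ik_i$ the number of generators, and let $E_0=\tfrac1vJ,E_1,\dots,E_d$ be the primitive idempotents, with $p_i(j)$ the eigenvalue of $A_i$ on the eigenspace of $E_j$, $m_j$ its multiplicity, and $q_j(i)=\tfrac{m_j}{k_i}p_i(j)$ the dual eigenvalues; all of these quantities are explicit $q$-analogues and may be quoted from \cite{Brouwer}. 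Since $S$ is an Erd\H os-Ko-Rado set, its characteristic vector $\chi$ satisfies $\chi^TA_d\chi=0$ (no two generators of $S$ lie in $R_d$), and every pair of distinct generators of $S$ lies in $R_1\cup\dots\cup R_{d-1}$.

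The key lemma is the following: if $M=\sum_{i=0}^dc_iA_i$ is positive semidefinite with $c_i\le0$ for $1\le i\le d-1$ and with $\rho:=c_0+\sum_{i\ge1}c_ik_i>0$ (this $\rho$ is the eigenvalue of $M$ on $E_0$, while $c_d$ is unrestricted), then $|S|\le vc_0/\rho$. Indeed, expanding $\chi^TM\chi$ over the ordered pairs of $S$, the diagonal contributes $c_0|S|$, the $R_d$-term vanishes, and every $R_i$-term with $1\le i\le d-1$ is $\le0$, so $\chi^TM\chi\le c_0|S|$; on the other hand $M\succeq0$ together with $M\mathbf1=\rho\mathbf1$ gives, by Cauchy--Schwarz applied to $\mathbf1$ and $\chi$, the estimate $\chi^TM\chi\ge(\mathbf1^TM\chi)^2/(\mathbf1^TM\mathbf1)=\rho|S|^2/v$, and the two combine to the claimed inequality. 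Writing $M=\sum_jf_jE_j$ with $f_j\ge0$, one has $\rho=f_0$ and $vc_0=\sum_jf_jm_j$, so the task becomes: choose $f_j\ge0$ minimising $\sum_jf_jm_j/f_0$ subject to $f_0+\sum_{j\ge1}f_jq_j(i)\le0$ for $1\le i\le d-1$. This is a version of the linear program considered in \cite{Ihringer&Metsch}; the choice $M=-\lambda_{\min}(A_d)\,I+A_d$ merely reproduces the ordinary ratio bound $v/(q^d+1)=\prod_{i=1,\,i\ne(d+1)/2}^{d}(q^{2i-1}+1)$, so a genuinely better $M$ is required.

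The core of the proof is to write down such an $M$. Guided by the shape of \eqref{mainresult}, where precisely the two factors with $2i=d\pm1$ are deleted, I would take $M$ supported on $E_0$ together with two further primitive idempotents of indices near $d/2$, and fix the weights by imposing $c_i=0$ at the two relations $R_i$ with $2i=d\pm1$ --- a $2\times2$ linear system for the weight ratios. With the weights so determined, three things then have to be checked: (a) the chosen $f_j$ are nonnegative, so that $M\succeq0$; (b) $c_i\le0$ for every $i$ with $1\le i\le d-1$; and (c) the value $\tfrac1{f_0}\sum_jf_jm_j$, after substituting the $q$-analogue formulas for $p_i(j)$ and $m_j$ and performing a telescoping simplification of the resulting Gaussian binomials, equals the right-hand side of \eqref{mainresult}.

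I expect step (b) to be the main obstacle. Because $M$ uses eigenspaces near the middle, the $c_i$ are alternating combinations of ratios of Gaussian binomials carrying signs $(-1)^{d-i}$, and it is delicate that all of them come out nonpositive over the full range $1\le i\le d-1$; indeed no single extra idempotent suffices once $d\ge5$ --- for $J=(d+1)/2$ one finds $p_J(J)>0$ there, whereas $p_J(J)<0$ when $d=3$ --- which is why two correction terms are needed and why the case $d=3$ is the easier one covered in \cite{Pepe}. The verification will have to be split into ranges of $i$ according to the parity of $d-i$ and the position of $i$ relative to $(d\pm1)/2$, with the dominant powers of $q$ compared in each range. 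The remaining ingredients --- the key lemma itself, the nonnegativity (a), and the $q$-series bookkeeping (c) --- are routine once the explicit weights are available.
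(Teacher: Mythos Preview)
Your general framework is exactly the one the paper uses: the ``key lemma'' is the weighted Hoffman ratio bound (the paper quotes it as Result~\ref{Resratio}), and the search for a good $M=\sum_ic_iA_i=\sum_jf_jE_j$ with $c_i\le 0$ for $1\le i\le d-1$ and $f_j\ge0$ is precisely the Delsarte-type linear programme you describe. So the strategy is right.

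The gap is in the choice of $M$. Your heuristic --- that because the two factors with $2i=d\pm1$ disappear from the product, $M$ should be supported on $E_0$ together with two idempotents $E_j$ of index near $d/2$, and that one should force $c_i=0$ at $i=(d\pm1)/2$ --- is not how the answer is obtained, and there is no reason to expect it to satisfy the $d-1$ sign constraints $c_i\le0$ with only two free weight ratios. The paper's $M$ is chosen in the \emph{primal} basis: one takes
\[
A=A_d-fA_{d-2},\qquad M=A-\lambda I,
\]
with $f>0$ determined by equating the eigenvalues of $A$ on $V_1$ and on $V_d$; this common value is the least eigenvalue $\lambda$. Thus in your dual picture $f_1=f_d=0$ while \emph{all other} $f_j$ are strictly positive, and on the primal side $c_i=0$ for every $i\neq 0,d-2,d$ with $c_{d-2}=-f<0$; so your step~(b) is trivial here, not ``the main obstacle''. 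The missing factors $(q^{d-2}+1)(q^d+1)$ in \eqref{mainresult} arise not from the indices of the idempotents used but from the algebraic simplification of $-\lambda N/(K-\lambda)$ carried out in Lemma~\ref{lemmabound}. In short: the nontrivial idea you are missing is to work with the simple combination $A_d-fA_{d-2}$ and tune $f$ so that $V_1$ and $V_d$ tie for the smallest eigenvalue; the rest is a direct computation with the explicit eigenvalues $P_{i,d}$ and $P_{i,d-2}$.
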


{\bf Remarks}  1. The set consisting of all generators of $H(2d-1,q^2)$, $d$ odd, on a point has size roughly $q^{d^2}$ whereas the bound given in the theorem has size roughly $q^{d^2+1}$.

2. It can be shown that equality can not occur in the theorem. At the end of Section 2 we sketch a proof of this fact.

3. For small $d$ it can be checked by computer that the given bound is also the solution of the optimization problem mentioned above. I guess this is true for all $d$, but I did not try to show this.

\section{Proof of the theorem}

Consider the graph whose vertices are the generators of the hermitian polar space $H(2d-1,q^2)$ of odd rank $d\ge 3$. Let $N$ be the number of generators and number them as $G_1,\dots,G_N$. For $0\le i\le d$, let $A_i$ be the real symmetric $(N\times N)$-matrix whose $(r,s)$-entry is $1$, if $G_r\cap G_s$ has rank $d-i$, and $0$ otherwise. These real matrices are symmetric and commute pairwise, so they can simultaneously be diagonalized. It is known that there are exactly $d+1$ common eigenspaces $V_0,\dots,V_d$ of these matrices. Also one of the eigenspaces is $\erz{j}$ where $j$ is the all one vector of length $N$. We choose notation so that $V_0=\erz{j}$. If $P_{i,j}$ denotes the eigenvalue of $A_j$ on $V_i$, then with a suitable ordering of the eigenspaces we have, see \cite{Vanhove}, Theorem 4.3.6
\begin{eqnarray*}
P_{i,d}&=&(-1)^i q^{(d-i)^2+i(i-1)},
\\
P_{i,d-2}&=&\sum_{u=0}^2(-1)^{i+u}\gauss{d-i}{2-u}\gauss{i}{u}q^{(d-2+u-i)^2+(i-u)(i-u-1)}.
\end{eqnarray*}
We want to apply Hoffman's bound (see blow) to the generalized adjacency matrix $A:=A_d-fA_{d-2}$ where we use for $f$ the value for which the smallest eigenvalue of $A$ is as large as possible. The eigenvalues for $A$ are of course $P_{i,d}-fP_{i,d-2}$, $i=0,\dots,d$. An investigation shows that the best choice for $f$ is when $P_{d,d}-fP_{d,d-2}=P_{1,d}-fP_{1,d-2}$, which results in the following definition.
\[
f:=\frac{(q^{d-1}-1)q^{4(d-2)}}{\gauss{d-1}{1}q^{2d-5}-\gauss{d-1}{2}+\gauss{d}{2}q^{d-3}}
\]
A direct calculation shows that

\begin{eqnarray*}
f&=&
\frac{(q^{2d}-1)(q^{d-1}-1)q^{4(d-2)}}{\gauss{d}{2}(q^{d-2}+1)(q^{2d-1}-q^{d-2}-q^{d-3}+1)}
\\
&=&\frac{(q^2-1)(q^4-1)(q^{d-1}-1)q^{4(d-2)}}{(q^{2d-2}-1)(q^{d-2}+1)(q^{2d-1}-q^{d-2}-q^{d-3}+1)}
\\
&<&q^2-1.
\end{eqnarray*}

\begin{Le}
The matrix $A$ has constant row sum
\[
K=q^{d^2}-f\gauss{d}{2}q^{(d-2)^2}>0.
\]
\end{Le}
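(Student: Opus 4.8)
The plan is to exploit that the all-one vector $j$ spans the eigenspace $V_0$, so that $j$ is a common eigenvector of all the matrices $A_0,\dots,A_d$. Hence each $A_i$ has constant row sum, equal to its eigenvalue $P_{0,i}$ on $V_0$, and therefore $A=A_d-fA_{d-2}$ has constant row sum $K=P_{0,d}-fP_{0,d-2}$.

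The next step is to read off $P_{0,d}$ and $P_{0,d-2}$ from the formulas quoted above by setting $i=0$. From $P_{i,d}=(-1)^i q^{(d-i)^2+i(i-1)}$ one gets $P_{0,d}=q^{d^2}$ at once. For $P_{i,d-2}$, putting $i=0$ leaves the factors $\gauss{0}{u}$, which vanish for $u=1,2$ and equal $1$ for $u=0$; hence only the $u=0$ summand survives and $P_{0,d-2}=\gauss{d}{2}q^{(d-2)^2}$. This already yields the claimed value $K=q^{d^2}-f\gauss{d}{2}q^{(d-2)^2}$.

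Finally, for positivity I would invoke the bound $f<q^2-1$ established just before the lemma. It suffices to check $f\gauss{d}{2}q^{(d-2)^2}<q^{d^2}$, equivalently $(q^2-1)\gauss{d}{2}<q^{d^2-(d-2)^2}=q^{4d-4}$. Writing $\gauss{d}{2}=\frac{(q^d-1)(q^{d-1}-1)}{(q^2-1)(q-1)}$, the left-hand side equals $\frac{(q^d-1)(q^{d-1}-1)}{q-1}<q^{2d-1}$ (using $q\ge 2$), and $q^{2d-1}\le q^{4d-4}$ holds for all $d\ge 2$; since here $d\ge 3$ this finishes the argument.

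\textbf{Main obstacle.} There is essentially no obstacle: the statement is a short computation once one recalls that the row sum of $A_i$ equals its eigenvalue on $V_0=\erz{j}$. The only points requiring a little care are the degenerate Gaussian binomial $\gauss{0}{u}$ in the evaluation of $P_{0,d-2}$, and choosing a crude enough estimate (via $f<q^2-1$) to get $K>0$.
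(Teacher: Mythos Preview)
Your proposal is correct and follows exactly the paper's (one-line) approach: identify the row sum as the eigenvalue of $A$ on $V_0=\erz{j}$, read off $P_{0,d}$ and $P_{0,d-2}$ from the stated formulas, and then invoke $f<q^2-1$ for positivity. One small slip to fix: in this paper the Gaussian binomials are in base $q^2$ (as the displayed simplifications of $f$ confirm), so $\gauss{d}{2}=\frac{(q^{2d}-1)(q^{2d-2}-1)}{(q^4-1)(q^2-1)}$ rather than the base-$q$ expression you wrote; your crude estimate for $K>0$ still goes through with the correct base.
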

\begin{proof}
The row sum of $A$ is the eigenvalue of $A$ on the eigenspace $\erz{j}$. Using $f<q^2-1$, it follows that $K>0$.
\end{proof}

\begin{Le}\label{smallesteigenvalue}
The smallest eigenvalue of $A$ is
\[
\lambda:=-q^{d(d-1)}+f\gauss{d}{2}q^{(d-2)(d-3)}
\]
and we have $\lambda<-q^{d^2-2d+2}$.
\end{Le}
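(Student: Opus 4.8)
The plan is to exhibit $\lambda$ as an eigenvalue of $A$ and to show that no eigenvalue is smaller. Write $\mu_i:=P_{i,d}-fP_{i,d-2}$ for the eigenvalue of $A$ on $V_i$. Substituting $i=d$ into the two displayed formulas and using that $d$ is odd gives $P_{d,d}=-q^{d(d-1)}$ and, since $\gauss{0}{2-u}$ is nonzero only for $u=2$, $P_{d,d-2}=-\gauss{d}{2}q^{(d-2)(d-3)}$; hence $\mu_d=\lambda$. Since $f$ is chosen precisely so that $P_{1,d}-fP_{1,d-2}=P_{d,d}-fP_{d,d-2}$, also $\mu_1=\lambda$. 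So it remains to prove $\mu_i\ge\lambda$ for $i\in\{0,2,3,\dots,d-1\}$; for $i=0$ this is the number $K>0$ of the previous Lemma, so from now on $2\le i\le d-1$.

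Put $e(i):=(d-i)^2+i(i-1)$, so $P_{i,d}=(-1)^iq^{e(i)}$, and note the identity $e(d)-e(i)=(d-i)(2i-1)$. For $2\le i\le d-1$ we have $(d-i)(2i-1)\ge d-1=e(d)-e(1)$, hence $e(i)\le e(1)=(d-1)^2$, and consequently $P_{i,d}-P_{d,d}\ge P_{1,d}-P_{d,d}=q^{d(d-1)}-q^{(d-1)^2}>0$ (for even $i$, $P_{i,d}-P_{d,d}=q^{e(i)}+q^{d(d-1)}>q^{d(d-1)}$; for odd $i$, $P_{i,d}-P_{d,d}=q^{d(d-1)}-q^{e(i)}\ge q^{d(d-1)}-q^{(d-1)^2}$).

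The inequality $\mu_i\ge\lambda$ reads $P_{i,d}-P_{d,d}\ge f\,(P_{i,d-2}-P_{d,d-2})$. Here $f=(P_{1,d}-P_{d,d})/(P_{1,d-2}-P_{d,d-2})>0$, since $P_{1,d}-P_{d,d}>0$ and $P_{1,d-2}-P_{d,d-2}=P_{1,d-2}+\gauss{d}{2}q^{(d-2)(d-3)}>0$ (the summand $P_{1,d-2}=\gauss{d-1}{1}q^{(d-2)^2}-\gauss{d-1}{2}q^{(d-3)^2}$ being positive because its first term dominates the second). Hence it suffices to prove $P_{i,d-2}\le P_{1,d-2}$: then either $P_{i,d-2}-P_{d,d-2}\le0$, and the inequality is clear, or $f\,(P_{i,d-2}-P_{d,d-2})\le f\,(P_{1,d-2}-P_{d,d-2})=P_{1,d}-P_{d,d}\le P_{i,d}-P_{d,d}$ by the previous paragraph. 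To prove $P_{i,d-2}\le P_{1,d-2}$ one compares $q$-degrees: the leading term of $P_{1,d-2}$ strictly dominates every term occurring in $P_{i,d-2}$ for $2\le i\le d-2$, while for $i=d-1$ a short direct computation gives $P_{d-1,d-2}<0$; the degree comparison is what uses $d\ge5$.

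Finally, $\lambda<-q^{d^2-2d+2}$. From $\lambda=P_{1,d}-fP_{1,d-2}=-q^{(d-1)^2}-fP_{1,d-2}$ and the explicit value of $f$, this inequality is equivalent, after clearing denominators, to $q(q^{d-2}-1)P_{1,d-2}>(q-1)\gauss{d}{2}q^{(d-2)(d-3)}$; substituting the explicit form of $P_{1,d-2}$ and expanding the Gaussian binomials, the leading terms of the two sides cancel and the remaining terms favour the left side once $d\ge5$. I expect the two delicate points to be exactly this last (leading-order tight) inequality and the uniform degree bound $P_{i,d-2}\le P_{1,d-2}$; in both, the hypothesis $d\ge5$ is what provides the necessary slack.
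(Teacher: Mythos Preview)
Your outline is sound and takes a genuinely different route from the paper, but the two points you yourself label ``delicate'' are left as assertions, and they are precisely where the work lies.

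The paper argues by a parity split on $i$. For even $i$ it observes $P_{i,d}>0$ and $P_{i,d-2}<0$, so $\mu_i>0$. For odd $i$ with $3\le i\le d-2$ it bounds $P_{i,d-2}$ above by its single positive summand (the $u=1$ term), then uses only the crude estimate $f<q^2-1$ to obtain $\mu_i\ge -q^{d^2-2d+2}>\lambda$. Thus the paper never compares $P_{i,d-2}$ with $P_{1,d-2}$ and never expands $f$ beyond $f<q^2-1$. Your approach---rewriting $\mu_i\ge\mu_d$ via the defining relation of $f$ and reducing to the pair $P_{i,d}-P_{d,d}\ge P_{1,d}-P_{d,d}$ and $P_{i,d-2}\le P_{1,d-2}$---is elegant, and the first inequality is established cleanly by your exponent identity $e(d)-e(i)=(d-i)(2i-1)$. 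But the second is only sketched: a degree count does show that every monomial in $P_{i,d-2}$ has $q$-degree at most $e(i)-1<e(1)-1=d^2-2d$, yet one must still control the coefficients and the negative summand $-\gauss{d-1}{2}q^{(d-3)^2}$ of $P_{1,d-2}$; degree comparison alone does not give the inequality for all $q\ge 2$.

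Similarly, your reduction of $\lambda<-q^{d^2-2d+2}$ to $q(q^{d-2}-1)P_{1,d-2}>(q-1)\gauss{d}{2}q^{(d-2)(d-3)}$ is correct, and the leading monomials (both of degree $d^2-d-1$) do cancel, but you stop before finishing the comparison. The paper avoids this leading-order cancellation altogether by first deriving the closed form
\[
\lambda=-\frac{(q+1)(q^{2d}-q^{2d-3}+q-1)\,q^{d^2-d-2}}{(q^{d-2}+1)(q^{2d-1}-q^{d-2}-q^{d-3}+1)},
\]
from which the desired bound is immediate. In short: your plan can be completed, but as written it omits exactly the two computations that carry the argument; the paper's parity split together with $f<q^2-1$, and its explicit formula for $\lambda$, are what make those steps routine.
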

\begin{proof}
It follows from the list of eigenvalues that $\lambda$ is the eigenvalue of $A=A_d-fA_{d-2}$ on the eigenspace $V_d$. Also, the way we determined $f$ shows that the eigenvalue of $A$ on $V_1$ is also $\lambda$. A straightforward calculation shows that
\[
\lambda=-\frac{(q+1)(q^{2d}-q^{2d-3}+q-1)q^{d^2-d-2}}
{(q^{d-2}+1)(q^{2d-1}-q^{d-2}-q^{d-3}+1)}
\]
and, using this expression, it is easy to see that $\lambda<-q^{d^2-2d+2}$.

The eigenvalue of $A$ on $V_0$ is $K$ and the previous lemma shows that $K>0$. For $1\le i\le d-1$, the eigenvalue of $A$ on $V_i$ is $P_{i,d}-fP_{i,d-2}$, and we show in the remaining part of the proof that this eigenvalue is larger than $\lambda$.

First consider the case when $i$ is odd. Then in the above formula for $P_{i,d-2}$ as a sum over $u\in\{0,1,2\}$, only the term corresponding to $u=1$ is positive. Hence
\begin{eqnarray*}
P_{i,d-2}&\le&\gauss{d-i}{1}\gauss{i}{1}q^{(d-1-i)^2+(i-1)(i-2)}
\\&\le&\frac{q^{(d-i)^2+i^2-i+3}}{(q^2-1)^2}.
\end{eqnarray*}
Using $f<q^2-1$, we obtain the following bound for the eigenvalue of $A$ on $V_i$.
\begin{eqnarray*}
P_i-fP_{i,d-2}&\ge&P_i-f\cdot \frac{q^{(d-i)^2+i^2-i+3}}{(q^2-1)^2}
\\
&\ge&-q^{(d-i)^2+i^2-i}-\frac{q^{(d-i)^2+i^2-i+3}}{(q^2-1)}
\\&\ge&-q^{d^2-2d+2}>\lambda.
\end{eqnarray*}
Here we have used that $3\le i\le d-2$ (since $i$ is odd).

If $i$ is even, then $P_{i,d}>0$ and it is not difficult to see that $P_{i,d-2}<0$. In this case the eigenvalue $P_i-fP_{i,d-2}$ of $A$ on $V_i$ is positive.
\end{proof}

\begin{Le}\label{lemmabound}
If $N$ is the number of generators of $H(2d-1,q^2)$, then
\[
\frac{-\lambda N}{K-\lambda}=((q^2+q+1)q^{2d-3}+1)\prod_{i=1\atop 2i\not=d\pm1}^{d-1}(q^{2i-1}+1).
\]
\end{Le}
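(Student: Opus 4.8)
The plan is to compute both sides directly as explicit rational (in fact polynomial) expressions in $q$ and $d$, and then verify they agree. First I would collect the ingredients already available: from Lemma~\ref{smallesteigenvalue} we have the closed form
\[
\lambda=-\frac{(q+1)(q^{2d}-q^{2d-3}+q-1)q^{d^2-d-2}}{(q^{d-2}+1)(q^{2d-1}-q^{d-2}-q^{d-3}+1)},
\]
and from the Lemma on the row sum we have $K=q^{d^2}-f\gauss{d}{2}q^{(d-2)^2}$, which combined with the displayed formula for $f$ gives $K$ as an explicit rational function of $q$ and $d$ as well. So the left-hand side $-\lambda N/(K-\lambda)$ becomes, once $N$ is inserted, a concrete expression to be simplified.

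The value of $N$ is standard: the number of generators of $H(2d-1,q^2)$ is $\prod_{i=1}^{d}(q^{2i-1}+1)$. I would substitute this together with the closed forms for $\lambda$ and $K$ into $-\lambda N/(K-\lambda)$. The key algebraic observation to exploit is that both $\lambda$ and $K$ carry the common denominator $(q^{d-2}+1)(q^{2d-1}-q^{d-2}-q^{d-3}+1)$; clearing it from numerator and denominator of $K-\lambda$ turns the fraction $-\lambda N/(K-\lambda)$ into $(\text{numerator of }-\lambda)\cdot N$ divided by $(\text{numerator of }K-\lambda)$, with the messy denominators cancelling. I would then factor the numerator of $K-\lambda$ and hope to see the factor $\prod_{i=1}^{d}(q^{2i-1}+1)$ — or rather enough of it to cancel against most of $N$ — plus a leftover factor matching $(q^2+q+1)q^{2d-3}+1$ times the two "missing" terms $q^{d-2+ \varepsilon}+1$ that the restricted product $\prod_{2i\ne d\pm1}$ omits (these are the indices $i$ with $2i=d-1$ and $2i=d+1$, i.e. the factors $q^{d-2}+1$ and $q^{d}+1$, which incidentally are exactly the factors appearing in the denominators of $\lambda$ and $K$).

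The main obstacle I anticipate is purely computational: showing that
\[
\text{num}(K-\lambda)=\bigl((q^2+q+1)q^{2d-3}+1\bigr)\cdot(q^{d-2}+1)(q^{d}+1)\cdot(\text{something dividing }N)^{-1}\cdot\text{num}(-\lambda N)
\]
collapses correctly, i.e. that after all cancellation exactly the asserted product of the remaining $(q^{2i-1}+1)$ survives. Since the parity condition $d\ge 5$ odd makes $d\pm1$ even, the two excluded indices are genuinely in range $1\le i\le d-1$, so the bookkeeping is consistent; but the factor $(q^2+q+1)q^{2d-3}+1$ is not obviously a product of cyclotomic-type pieces, so I expect it to emerge only after expanding and regrouping $q^{2d}-q^{2d-3}+q-1=(q-1)(q^{2d-1}+\cdots)$-style identities. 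A cleaner route, which I would try first, is to avoid $N$ altogether: note $-\lambda N/(K-\lambda) = N/(1 + K/(-\lambda))$ and compute $K/(-\lambda)$ — the denominators of $K$ and $\lambda$ are the same, so this ratio is a ratio of two polynomials in $q$; simplify $1+K/(-\lambda)$, and then divide $N=\prod_{i=1}^d(q^{2i-1}+1)$ by it, checking termwise that the factors $q^{d-2}+1$ and $q^d+1$ divide the denominator while the remaining $d-2$ factors of $N$ survive untouched, leaving the stated product times $(q^2+q+1)q^{2d-3}+1$. Either way the content is a finite identity between rational functions of $q$ (with $d$ a parameter), verifiable by direct expansion.
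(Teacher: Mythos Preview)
Your plan is correct and matches the paper's approach: a direct algebraic simplification of $-\lambda/(K-\lambda)$ followed by multiplication by $N=\prod_{i=1}^d(q^{2i-1}+1)$. The paper's execution differs only in that it does not use the closed form for $\lambda$; instead it writes $f=f_1/f_2$, substitutes into the defining expressions $K=q^{d^2}-f\gauss{d}{2}q^{(d-2)^2}$ and $\lambda=-q^{d(d-1)}+f\gauss{d}{2}q^{(d-2)(d-3)}$, introduces an auxiliary polynomial $g$ via $f_2(q^{2d}-1)=\gauss{d}{2}g$, and reduces stepwise to
\[
\frac{-\lambda(q^d+1)}{K-\lambda}=\frac{(q^2+q+1)q^{2d-3}+1}{(q^{2d-1}+1)(q^{d-2}+1)}.
\]
One small slip in your bookkeeping: the product on the right-hand side has only $d-3$ factors (it runs over $1\le i\le d-1$, not $1\le i\le d$, and excludes two indices), so \emph{three} factors of $N$ must cancel --- in addition to $q^{d-2}+1$ and $q^d+1$, the factor $q^{2d-1}+1$ corresponding to $i=d$ also drops out, exactly as the displayed ratio shows.
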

\begin{proof}
We denote by $f_1$ and $f_2$ the nominator and denominator in the definition of $f$. We have
\begin{eqnarray*}
\frac{-\lambda}{K-\lambda}
&=&\frac{q^{d(d-1)}f_2-(q^{d-1}-1)q^{4(d-2)}\gauss{d}{2}q^{(d-2)(d-3)}}
{(q^{d^2}+q^{d(d-1)})f_2-(q^{d-1}-1)q^{4(d-2)}\gauss{d}{2}(q^{(d-1)^2}+q^{(d-2)(d-3)})}
\\
&=&\frac{q^2f_2-(q^{d-1}-1)\gauss{d}{2}}
{q^2(q^d+1)f_2-(q^{d-1}-1)\gauss{d}{2}(q^{d-2}+1)}\ .
\end{eqnarray*}
An easy calculation gives
\[
f_2(q^{2d}-1)=\gauss{d}{2}g
\]
where
\[
g=(q^4-1)q^{2d-5}-(q^{2d-4}-1)+(q^{2d}-1)q^{d-3}.
\]
Hence
\begin{eqnarray*}
\frac{-\lambda}{K-\lambda}
&=&\frac{q^2g-(q^{d-1}-1)(q^{2d}-1)}
{q^2(q^d+1)g-(q^{d-1}-1)(q^{2d}-1)(q^{d-2}+1)}\ .
\end{eqnarray*}
Thus
\begin{eqnarray*}
\frac{-\lambda(q^d+1)}{K-\lambda}
&=&\frac{q^2g-(q^{d-1}-1)(q^d-1)}
{q^2g-(q^{d-1}-1)(q^d-1)(q^{d-2}+1)}
\\
&=&1+\frac{(q^{d-1}-1)(q^d-1)q^{d-2}(1-q^2)}
{q^2g-(q^{d-1}-1)(q^d-1)(q^{d-2}+1)}
\\
&=&1+\frac{(q^{d-1}-1)(q^d-1)q^{d-2}(1-q^2)}
{(q^2-1)(q^{2d-1}+1)(q^{d-2}+1)}
\\
&=&1-\frac{(q^{d-1}-1)(q^d-1)q^{d-2}}
{(q^{2d-1}+1)(q^{d-2}+1)}
\\
&=&\frac{(q^2+q+1)q^{2d-3}+1}
{(q^{2d-1}+1)(q^{d-2}+1)}.
\end{eqnarray*}
As $N=\prod_{i=1}^d(q^{2i-1}+1)$, the assertion follows.
\end{proof}

Let $G$ be a simple and non-empty graph with $N$ vertices $v_1,\dots,v_N$. A real symmetric $N\times N$ matrix $A$ with diagonal entries zero is called an {\it extended weight matrix} of $G$ if $A_{rs}\le 0$ whenever $r\not=s$ and $\{v_r,v_s\}$ is not an edge of the graph $G$, and if $A_{rs}\not=0$ for at least one edge $\{v_r,v_s\}$ of the graph. It is called a $K$-{\it regular} extended weight matrix, if it has in addition constant row sum $K$. The following result appeared in various forms in the literature, we present it in the form of Corollary 3.3 in \cite{Elzinga} but it already appeared in Lemma 6.1 of \cite{Godsil} when applied to the matrix $A-\lambda I$, and it was also mentioned in \cite{Luz}. For later application, we scetch the easy proof. Here $I$ stand for the $N\times N$ identity matrix and $J$ for the all-one matrix of the same size.

\begin{Res}\label{Resratio}
Let $\Gamma$ be a finite simple and non-empty graph with $N$ vertices and suppose that $A$ is a $K$-regular generalized weight matrix of $G$ with least eigenvalue $\lambda$. Then every independent set $S$ of $G$ satisfies
\begin{align*}
|S|\cdot (K+|\lambda|)\le |\lambda|\cdot N.
\end{align*}
\end{Res}

From Result \ref{Resratio} applied to $A=A_d-fA_{d-2}$ and from Lemma \ref{lemmabound}, we find that an EKR set $S$ of $H(2d-1,q^2)$ satisfies the inequality (\ref{mainresult}) of Theorem \ref{main}. This completes the proof of Theorem \ref{main}.

We remark hat equality in (\ref{mainresult}) is impossible and sketch a proof.
Suppose that $|S|$ satisfies the bound (\ref{mainresult}) with equality. Then the standard proof of Result \ref{Resratio} gives information on the characteristic vector $\chi$ of $S$, in fact, it must lie in the span of the all-one-vector $j$ and the the eigenspace of $A$ for the eigenvalue $\lambda$. Our arguments show that this eigenspace of $A$ is $V_1+V_d$, hence $\chi=\frac{|S|}{N}j+v_1+v_d$ with $v_1\in V_1$ and $v_d\in V_d$ (here we use $j^\top v_1=j^\top v_d=0$, $j^\top j=N$ and $j^\top\chi=|S|$). The vectors $v_1$ and $v_d$ are also eigenvectors of $A_1,\dots,A_d$ and the eigenvalues are known. Since $S$ is an Erd\H os-Ko-Rado set, the entries of $A_d\chi$ corresponding to elements of $S$ are zero. This gives a linear equation for the entries $a_1$ and $a_d$ of $v_1$ and $v_d$ corresponding to some element of $S$. A second linear equation comes from $\chi=\frac{|S|}{N}j+v_1+v_d$, namely $1=\frac{|S|}{N}+a_1+a_d$. The two
equations are linearly independent, so $a_1$ and $a_d$ can be calculated and are of course independent of the element of $S$. With this information the entries of $A_i\chi$ corresponding to elements of $S$ can be calculated for all $i$, which gives the number of elements of $S$ that meet a given element of $S$ in dimension $d-i$. It turns out that not all these numbers are integers, which is the desired contradiction. The same argument was used in the last section of \cite{MetschEKRn}.

\bibliography{bibdatei}

\end{document}